\newtheorem{theorem}{Theorem}
\newtheorem{lemma}{Lemma}
\begin{document}

\title{Extended Regularized Dual Averaging Methods for Stochastic Optimization
}

\author{Jonathan W. Siegel \\
  Department of Mathematics\\
  Pennsylvania State University\\
  University Park, PA 16802 \\
  \texttt{jus1949@psu.edu} \\
  \And Jinchao Xu \\
  Department of Mathematics\\
  Pennsylvania State University\\
  University Park, PA 16802 \\
  \texttt{jxx1@psu.edu} \\
}

\maketitle

\begin{abstract}
We introduce a new algorithm, extended regularized dual averaging (XRDA), for solving regularized stochastic optimization problems, which generalizes the regularized dual averaging (RDA) method. The main novelty of the method is that it allows a flexible control of the backward step size. For instance, the backward step size used in RDA grows without bound, while for XRDA the backward step size can be kept bounded. We demonstrate experimentally that additional control over the backward step size can significantly improve the convergence rate of the algorithm while preserving desired properties of the iterates, such as sparsity. Theoretically, we show that the XRDA method achieves the same convergence rate as RDA for general convex objectives.
\keywords{Convex Optimization \and Subgradient Methods \and Structured Optimization
\and Non-smooth Optimization}
\end{abstract}

\section{Introduction}
Optimizing convex objectives with a regularization term which promotes a certain structure of the minimizer, for example the $\ell^1$-norm promoting sparsity or the nuclear norm promoting low rank \cite{candes2009exact}, are very common and important in machine learning. Stochastic optimization of such objectives poses a unique challenge since traditional methods such as stochastic gradient descent (SGD) \cite{lecun1998gradient} are often not effective at producing the desired structure (i.e. sparsity or low-rank) of the iterates \cite{xiao2010dual}. This motivated the introduction of the well-known RDA method and its variants, which are able to effectively produce the desired structure of the iterates \cite{xiao2010dual,chen2012optimal}. We introduce a generalization of RDA, called extended regularized dual averaging (XRDA), which we show can significantly improve convergence while still preserving the desired structure of the iterates.

Let us begin by describing these issues in detail and giving an overview of RDA. Consider the subgradient descent and dual averaging methods \cite{nesterov2009primal} for minimizing a Lipschitz convex function $F$, given by
\begin{equation}\label{eq-1}
 x_{n+1} = x_n - s_ng_n,
\end{equation}
where $g_n\in \partial F(x_n)$. It is well known that with a step size $s_n = n^{-\frac{1}{2}}$ this method attains a convergence rate of $O(n^{-\frac{1}{2}}\log{n})$. The simple dual averaging (SDA) method of Nesterov \cite{nesterov2009primal}, which is given by
\begin{equation}
 x_{n+1} = \arg\min_x\left(\sum_{i=1}^n \langle s_ig_i, x\rangle + \frac{\alpha_{n+1}}{2}\|x - x_1\|^2\right),
\end{equation}
generalizes subgradient descent (note that setting $\alpha_n = 1$ recovers the iteration \eqref{eq-1}). The advantage of the more general SDA method is that by setting $s_n = 1$ and $\alpha_n = \sqrt{n}$, the logarithmic factor in the convergence rate can be removed \cite{nesterov2009primal}. To illustrate the relationship between this new method and original subgradient descent, we rewrite this new iteration as
\begin{equation}
 x_{n+1} = \frac{\alpha_{n}}{\alpha_{n+1}}x_n + \left(1 - \frac{\alpha_{n}}{\alpha_{n+1}}\right)x_1 - \tilde{s}_n g_n,
\end{equation}
where the effective step size is $\tilde{s}_n = \frac{s_n}{\alpha_{n+1}}$. Thus the difference between this new method and the subgradient descent method is an averaging with the initial iterate $x_1$. It is remarkable that this provides a significant improvement in the convergence rate. These results hold in more generality with the $\ell^2$ distance replaced by the Bregman distance $D_{\phi}(x,y)$ with respect to a convex function $\phi$, which we describe in more detail in Section \ref{notation-setup}, but for simplicity we stay with the current setting throughout the introduction.

Next, we consider the composite (or regularized) optimization problem
\begin{equation}\label{split_problem}
    \arg\min_{x\in A}~[f(x) = F(x) + G(x)],
\end{equation}
where $F(x)$ is a convex Lipschitz function and $G(x)$ is a convex function.
A standard method for solving this is the forward-backward subgradient method
\begin{equation}\label{fb_discretization}
\begin{split}
    &x_{n+\frac{1}{2}} = x_{n} - s_ng_n\\
    &x_{n+1} - x_{n+\frac{1}{2}}\in - s_n\partial G(x_{n+1}).
\end{split}
\end{equation}
where $g_n\in \partial F(x_n)$ \cite{bertsekas1997nonlinear}.
Note that the second step above corresponds to backward Euler and is known as the proximal map for $G$ \cite{brezis1978produits}. With a choice of step size $s_n = O(n^{-\frac{1}{2}})$, this method also achieves a convergence rate of $O(n^{-\frac{1}{2}}\log{n})$. As before the logarithm can be removed by introducing a similar averaging with $x_1$ as in the SDA method. Note also that the constants in the convergence rates only depend upon the Lipschitz constant of $F$ and not $G$, which is the advantage of using the forward-backward splitting.
%
%
%
%
%

%

%

%
%
%
%
%
%
%
%

In many cases of practical interest, the subgradients $g_i\in \partial F(x_n)$ in
the forward step are not computed exactly, but rather replaced by an unbiased sample $\tilde g_i$. Using this sample in \eqref{fb_discretization} results in forward-backward stochastic gradient descent. Stochastic gradient descent has proven extremely useful for training a variety of machine learning models 
\cite{lecun1998gradient,lecun1999object}. However, for problems where the minimizer is expected to have a special structure, forward-backward stochastic gradient descent often has the drawback that the iterates it
produces do not have the desired structure. A very common example is sparsity. For instance, consider the forward-backward stochastic
gradient descent algorithm applied to an objective $F(x) + G(x)$ with $G(x) = \lambda\|x\|_1$ an $l^1$ regularization term:
\begin{equation}
\begin{split}
    &x_{n+\frac{1}{2}} = x_{n} - s_n\tilde g_i\\
    &x_{n+1} 
    = \arg\min_x~\left(\lambda s_n\|x\|_1 + \frac{1}{2}\|x - x_{n+\frac{1}{2}}\|^2\right) = \begin{cases} 
      x_{n+\frac{1}{2}} - \lambda s_n & x > \lambda s_n \\
      0 & |x|\leq \lambda s_n \\
      x_{n+\frac{1}{2}} + \lambda s_n & x < -\lambda s_n.
   \end{cases}
\end{split}
\end{equation}
Here $\tilde g_i$ denotes a sampled subgradient. 
Notice that the soft-thresholding parameter in the above iteration is $\lambda s_n$, which must be very small since
the step size $s_n$ must go to $0$ if we wish to converge to the exact optimizer \cite{xiao2010dual} (note here that the gradient samples are stochastic). This means that the iterates generated
by this methods will not be sparse even if the true optimizer is sparse. Further, due to the slow convergence rate of the algorithm, thresholding after training may introduce significant errors even though the true optimizer is sparse \cite{xiao2010dual}. This property of stochastic gradient descent also applies to other types of structure, such as low rank structure expected when using the nuclear norm as a regularizer \cite{candes2009exact}. 

This motivated the introduction of the regularized dual averaging (RDA) method \cite{xiao2010dual}, which is obtained by modifying the SDA algorithm in the following way
\begin{equation}
  x_{n+1} = \arg\min_x\left(\sum_{i=1}^n \langle s_ig_i, x\rangle + \frac{\alpha_{n+1}}{2}\|x - x_1\|^2 + \gamma_nG(x)\right),
\end{equation}
where $\gamma_n = \sum_{i=1}^n s_i$. We can clarify the relationship between this method and the forward-backward subgradient descent method \eqref{fb_discretization} by setting $\alpha_n = 1$ and rewriting it in the following `leap-frog' form
\begin{equation}\label{leap_frog}
     \begin{split}
    &x_{n+\frac{1}{2}} = x_{n - \frac{1}{2}} - s_n\tilde g_i\\
    &x_{n+1} = \arg\min_x~\left( \gamma_nG(x) + \frac{1}{2}\|x - x_{n+\frac{1}{2}}\|^2\right),
\end{split}
\end{equation}
where $\gamma_n = \sum_{i=1}^n s_i$. This method achieves a convergence rate of $O(n^{-\frac{1}{2}}\log{n})$ and the logarithm here can be removed by introducing an averaging with $x_1$ similar to SDA (this corresponds to a different choice of the parameters $s_n$ and $\alpha_n$). Notice that in the RDA iteration \eqref{leap_frog}, the backward step size grows as the sum of the previous forward step sizes $s_n$ and grows without bound as $n\rightarrow \infty$. 
This is in contrast to forward-backward stochastic gradient descent \eqref{fb_discretization}, where the backward step and forward step are the same, and thus the backward step size goes to $0$.
As a result, the RDA method produces sparse iterates while forward backward SGD does not. However, it is quite unintuitive that the backward step size should grow without bound, which we attempt to address in this paper.

We introduce the extended regularized dual averaging (XRDA) method, which generalize the RDA method. This method gives more precise control over the backward step size, for instance a special case of XRDA is the `averaged leap-frog scheme' 
\begin{equation}\label{averaged_leap_frog}
     \begin{split}
    &x_{n+\frac{1}{2}} = \left(1 - \mu_n\right)x_{n - \frac{1}{2}} + \mu_nx_n - s_n\tilde g_i\\
    &x_{n+1} = \arg\min_x~\left(\lambda \gamma_n|x|_1 + \frac{1}{2}\|x - x_{n+\frac{1}{2}}\|^2\right)
\end{split}
\end{equation}
where $0\leq \mu_i\leq 1$ and $\gamma_n = (1-\mu_n)\gamma_{n-1} + s_n$. As we show in Section \ref{XRDA-section}, the convergence rate of this algorithm is $O(n^{-\frac{1}{2}}\log{n})$ and a slightly modified version achieves the optimal black-box convergence rate of $O(n^{-\frac{1}{2}})$ \cite{nemirovsky1983problem,nesterov2013introductory,xiao2010dual}. Further, in Section \ref{experimental-results-section} we show experimentally that choosing the parameter $\mu_n$ so that the backward step $\gamma_n$ remains bounded leads to significantly improved convergence over RDA while preserving the sparsity of the iterates.

Concerning non-convex applications of the XRDA method, we note that the BinaryConnect \cite{courbariaux2015binaryconnect} and BinaryRelax \cite{yin2018binaryrelax} methods, which have been used
to train quantized neural networks, are simply the XRDA method applied to non-convex objectives. Specifically, BinaryConnect consists of the leap frog iteration \eqref{leap_frog} applied to the case where $G = \chi_Q$ is the indicator function of the set $Q$ of quantized parameters, while BinaryRelax is just \eqref{leap_frog} applied to $G(x) = \text{dist}(x,Q)^2$. In addition, the RDA method has also been successfully applied to the training of sparse neural networks \cite{jia2018modified} and recent numerical experiments show that the XRDA method improves upon this for the training of sparse neural networks \cite{siegel2020training}.

The paper is organized as follows. In Section \ref{notation-setup}, we introduce some basic notation and the basic problem formulation. In Section \ref{RDA-section}, we recall the basic ideas behind the dual averaging \cite{nesterov2009primal} and RDA \cite{xiao2010dual} methods.
Then, in Section \ref{XRDA-section}, we introduce the extended regularized dual averaging (XRDA) method and derive its convergence rate. Here we also exhibit a variety of
special cases of the XRDA method and compare with RDA and SDA. Then, in Section \ref{experimental-results-section} we present experimental results showing that the XRDA method significantly exhibits significantly improved convergence relative to RDA while still preserving the sparsity of the iterates. Finally, we give concluding remarks.

\section{Notation and Setup}\label{notation-setup}
In this section, we introduce the basic setting and notation that we will use. This section is essentially a review of the basics of mirror descent \cite{nemirovski2004prox} and is a common framework for subgradient methods in convex optimization \cite{bubeck2015convex}. We refer to \cite{rockafellar2015convex} for the basic notions of relative interior, lower semi-continuity, and subgradients of convex functions.

We are interested in first-order methods for optimizing convex functions $f$ on a closed convex subset $A\subset V$ of a Banach space $V$. A universal issue, which applies to all first-order methods, including gradient descent, subgradient descent, accelerated gradient descent, and conjugate gradient, is that gradients and subgradients of the objective lie naturally in the dual space $V^*$, i.e.
\begin{equation}
 \nabla f\in V^*,~\partial f\subset V^*.
\end{equation}
Consequently, it does not a priori make sense to add gradients of $f$ to primal points in $V$. In order to circumvent this, we will introduce a convex lower semicontinuous function $\phi:V\rightarrow \mathbb{R}\cup \{\infty\}$, called a mirror function (see \cite{bubeck2015convex,nemirovski2004prox,beck2003mirror,nemirovsky1983problem}). We require that $\phi(x) = \infty$ for $x\notin A$, and that $\phi$ is strongly convex with parameter $\sigma$, i.e. that for $x,y\in V$ and $\alpha\in[0,1]$, we have
\begin{equation}
 \phi(\alpha x + (1-\alpha) y) \leq \alpha\phi(x) + (1-\alpha)\phi(y) - \frac{\sigma}{2}\|x-y\|_V^2.
\end{equation}
Note that this is only non-trivial if $x,y\in A$.
The subdifferential of the mirror function $\phi$ gives a map $(\partial \phi)^{-1}:V^*\rightarrow A$, defined by
\begin{equation}\label{mirror-map}
 (\partial \phi)^{-1}(h) = \arg\min_{x\in V} [\phi(x) - \langle h, x\rangle].
\end{equation}
Our assumptions on $\phi$ imply that the above minimizer will lie in $A$. Further, the strong convexity of $\phi$ implies that the minimizer above is unique. From the objective \eqref{mirror-map}, it is also clear that $x = (\partial \phi)^{-1}(h)$ iff $h\in \partial \phi(x)$.

The idea of mirror descent is to perform gradient descent in the dual space and to use $(\partial \phi)^{-1}$ to move iterates from the dual to the primal space. Specifically, we let $h_0\in V^*$ and set $x_0 = (\partial \phi)^{-1}(h_0)$. The mirror gradient step \cite{nemirovski2004prox,beck2003mirror,nemirovsky1983problem} with step size $s_n$ is given by
\begin{equation}\label{mirror-descent-1}
 h_{n+1} = h_n - s_ng_n,~x_{n+1} = (\partial \phi)^{-1}(h_{n+1}),
\end{equation}
where $g_n\in \partial f(x_n)$. This iteration clearly preserves the property that $h_n\in \partial \phi(x_n)$ at every step. 

The iteration \eqref{mirror-descent-1} is commonly written using the Bregman distance \cite{bregman1967relaxation}
\begin{equation}
 D_{\phi,h}(x,y) = \phi(x) - \phi(y) - \langle h, x-y\rangle,
\end{equation}
defined for $x,y\in V$ and $h\in \partial \phi(x)$, as follows
\begin{equation}\label{mirror-descent-2}
 x_{n+1} = \arg\min_{x\in V} \left(s_n\langle g_n, x\rangle + D_{\phi,h_n}(x,x_n)\right),~h_{n+1} = h_n - s_ng_n.
\end{equation}
This formulation will also be important in our later analysis.

Let us consider a few common examples to illustrate this abstract framework. Suppose that $V$ is a Hilbert space and $A = V$. In this case, a simple choice for $\phi$ is the squared norm $\phi(x) = \frac{1}{2}\|x\|_V^2$. The map $(\partial \phi)^{-1}$ in this case is the identity and the mirror descent iteration \eqref{mirror-descent-1} reduces to regular gradient descent.

A common variation on this example is to choose the mirror function $\phi(x) = \frac{1}{2}\langle x, Ax\rangle_V$ where $A$ is a positive definite operator on the Hilbert space $V$. In this case, the map $(\partial \phi)^{-1}$ is given by $(\partial \phi)^{-1}(x) = A^{-1}x$, and mirror descent \eqref{mirror-descent-1} corresponds to preconditioned gradient descent with (linear) preconditioner $A^{-1}$.

%
%
%

%
%
%
%
%
%
%
%
%
%
%
%
%

%
%
Finally, another common special case\cite{juditsky2008learning,nemirovsky1983problem,ben1999conjugate,bubeck2015convex,boyd2004convex} is when $V = \ell^1_n$ and $A = \{x\geq 0:x^T1 = 1\}$ is the probability simplex. In this case, one chooses 
$$\phi(x) = \begin{cases}
             \displaystyle\sum_{i=1}^n x_i\log(x_i) & x\in A \\
             \infty & \text{otherwise}
            \end{cases}
$$ 
as the negative entropy, which is strongly convex with respect to the $|\cdot|_1$ norm. This fact is known as Pinsker's inequality \cite{pinsker1964information}. In this case, the map $(\partial \phi)^{-1}$ is given by 
$$(\partial \phi)^{-1}(x)_i = \left(\sum_{j=1}^n e^{x_j}\right)^{-1}e^{x_i},$$
which is called the softmax function. See \cite{bubeck2015convex} and the references therein for more examples.

In the following, we will need to consider a special case of this setup where we additionally assume that $\phi$ restricted to the affine hull $\text{aff}(A)$ is differentiable on the relative interior $\text{relint}(A)$ (see \cite{rockafellar2015convex,bertsekas1997nonlinear,zalinescu2002convex}, for instance, for the definition and basic properties of the affine hull and relative interior). In addition, we will need that $\nabla \phi$ (here we are restricting to the affine hull) satisfies $\nabla \phi(x)\rightarrow \infty$ as $x\rightarrow \partial_{ri} A$ approaches the relative boundary of $A$. Note that all of the examples given here satisfy these stronger assumptions.

In fact, in this case it is convenient to make the additional assumption that $\text{aff}(A) = V$. This can be assumed without loss of generality by restricting to $\text{aff}(A)$. These additional assumptions imply that $\phi$ is differentiable on $A^o$ and that $\nabla \phi:A^o\rightarrow V^*$ and $(\nabla \phi)^{-1}:V^*\rightarrow A^o$ (here $A^o$ denotes the interior of $A$) gives inverse bijections between $V^*$ and $A^o$. Additionally, the Bregman distance is given by
\begin{equation}\label{simplified-bregman}
  D_{\phi}(x,y) = \phi(x) - \phi(y) - \langle \nabla \phi, x-y\rangle,
\end{equation}
for $x,y\in A^o$ in the interior of $A$ (see \cite{nemirovski2004prox,bubeck2015convex} for these general properties and theory).
The mirror descent iteration \eqref{mirror-descent-2} simplifies to
\begin{equation}\label{mirror-descent-final}
 x_{n+1} = \arg\min_{x\in V} \left(s_n\langle g_n, x\rangle + D_{\phi}(x,x_n)\right),
\end{equation}
where $g_n\in \partial f(x_n)$. Equivalently, we can write $\nabla \phi(x_{n+1}) = \nabla \phi(x_n) - s_ng_n$.

The iteration \eqref{mirror-descent-final} corresponds to a forward gradient step. Analogously, there is a corresponding backward gradient step for an objective $G(x)$. This corresponds to solving the backward equation
\begin{equation}\label{backward-mirror-descent-1}
 \nabla \phi(x_{n+1}) = \nabla \phi(x_n) - s_ng_{n+1},
\end{equation}
where $g_{n+1} \in \partial G(x_{n+1})$. This is easily seen to be equivalent to 
\begin{equation}\label{backward-mirror-descent-final}
 x_{n+1} = \arg\min_{x\in V} (s_nG(x) + D_{\phi}(x,x_n)).
\end{equation}
%
%

%

%
%
%
%
%

%
%
%
%
%
%
%
%
%
%
%
%
%
%
%
%
%
%
%
%
%
%
%
%
%

For practical implementation, it is important that the optimization problem appearing in \eqref{backward-mirror-descent-final} can be efficiently solved. We note that if $\phi = \frac{1}{2}\|\cdot\|_2^2$, then two common choices for $f$ are
\begin{equation}
 G(x) = i_C(x) = 
 \begin{cases}
                    0 &~ \text{if $x\in C$} \\
		    +\infty &~ \text{if $x\notin C$},
 \end{cases}
\end{equation}
in which case the mirror-prox update \eqref{backward-mirror-descent-final} is a projection onto the set $C$, and $G(x) = |x|_1$, in which case the mirror-prox \eqref{backward-mirror-descent-final} update is the well-known soft-thresholding step \cite{tibshirani1996regression}.

In the following, we will use combinations of the forward \eqref{mirror-descent-final} and backward steps \eqref{backward-mirror-descent-final} to optimize a composite objective $f(x) = F(x) + G(x)$.

\section{Dual Averaging and Regularized Dual Averaging}\label{RDA-section}
We begin by briefly recalling the dual averaging method \cite{nesterov2009primal} for minimizing a
convex objective $f$ to help explain the idea behind both RDA \cite{xiao2010dual} and the XRDA method.

The idea behind the dual averaging method is to observe that, due to the convexity of the objective $f$, the sequence of iterates $x_i$ and 
subgradients $g_i\in \partial f(x_i)$ generated by our algorithm leads to a sequence of lower bounds on the objective
\begin{equation}
 l_i(z) = f(x_i) + \langle g_i, z - x_i\rangle.
\end{equation}
Now let $s_i$ be a sequence of weights and consider the primal average
\begin{equation}
 \bar{x}_n = S_n^{-1}\displaystyle\sum_{i=1}^n s_ix_i,
\end{equation}
where $S_n = \sum_{i=1}^ns_i$. From the convexity of $f$, we obtain a bound on the objective at $\bar{x}_n$
\begin{equation}\label{eq_314}
 f(\bar{x}_n) \leq S_n^{-1}\displaystyle\sum_{i=1}^n s_if(x_i),
\end{equation}
while the corresponding dual average (i.e. average of the lower bounds $l_i$) is a lower bound on $f$, i.e.
\begin{equation}\label{eq_318}
 f(z) \geq S_n^{-1}\displaystyle\sum_{i=1}^n s_il_i(z) = S_n^{-1}\displaystyle\sum_{i=1}^n s_i(f(x_i) + \langle g_i, z - x_i\rangle).
\end{equation}
Combining equations \eqref{eq_314} and \eqref{eq_318} we see that
\begin{equation}\label{eq_309}
 f(\bar{x}_n) - f(z) \leq S_n^{-1}\displaystyle\sum_{i=1}^n s_i(f(x_i) - l_i(z)) = S_n^{-1}\displaystyle\sum_{i=1}^n s_i\langle g_i, x_i - z\rangle
\end{equation}
holds for all $z$, in particular it holds for $z = x^*$. The challenge now is to choose the sequence of iterates $x_i$
and weights $s_i$
so that the right hand side of equation \eqref{eq_309} goes to $0$ as $n\rightarrow \infty$ for $z = x^*$. The solution to this problem presented in the seminal paper \cite{nesterov2009primal}
by Nesterov is to choose
\begin{equation}\label{SDA_iteration}
 x_{n+1} = \arg\min_x \left(\displaystyle\sum_{i = 1}^n s_il_i(x) + \alpha_{n+1}D_\phi(x,x_1)\right)
\end{equation}
for a sequence of parameters $s_i$ and $\alpha_i$, in which case we have
\begin{equation}
 \displaystyle\sum_{i=1}^n \langle g_i, x_i - z\rangle\leq 
 \left(\alpha_nD_\phi(z,x_1) + 
 \frac{M^2}{2\sigma}\displaystyle\sum_{i = 1}^n \frac{s_i^2}{\alpha_i}\right),
\end{equation}
where $M$ is the Lipschitz constant of $f$ (i.e. $\|g_i\|_{V^*}\leq M$). The choice $\alpha_i = 1$ and $s_i = i^{-\frac{1}{2}}$, which corresponds to subgradient descent, gives a convergence rate which is $O(n^{-\frac{1}{2}}\log{n})$. Optimal choices of the weights $s_i$ and
parameters $\alpha_i$ allow one to remove the logarithmic factor and obtain an objective convergence rate which is $O(n^{-\frac{1}{2}})$ \cite{nesterov2009primal}.

Next, we wish to generalize the dual averaging method to composite objectives $f$ which are the sum of two pieces
\begin{equation}\label{composite_problem}
    \arg\min_{x\in A} [f(x) = F(x) + G(x)].
\end{equation}
Here $F$ is a convex function which is Lipschitz with respect to the norm $\|\cdot\|_V$ and
$G$ is a convex function for which we can solve the backward step \eqref{backward-mirror-descent-final}.
As discussed in the previous section, 
common choices for $G$ include, for example, 
the $l_1$-norm (if $\phi = \frac{1}{2}\|x\|_2^2$) or the characteristic function of a convex set.

In his seminal work \cite{xiao2010dual}, Xiao extended Nesterov's analysis to the composite case by using the lower bound 
\begin{equation}\label{composite_lower_bound}
 l_i(z) = F(x_i) + \langle g_i, z - x_i\rangle + G(z) \leq f(z),
\end{equation}
where $g_i\in \partial F(x_i)$, in \eqref{SDA_iteration}. So here we are using a linear lower bound on the piece $F$ and using the function $G$ itself in the lower bound. This results in the RDA method \cite{xiao2010dual}
\begin{equation}\label{naive_iteration}
 x_{n+1} =
 \arg\min_x \left(\displaystyle\sum_{i = 1}^n \langle s_ig_i, x\rangle + \alpha_{n+1}D_\phi(x,x_1) + \gamma_{n+1} G(x)\right).
\end{equation}
where $\gamma_{n + 1} = \sum_{i = 1}^n s_i$. From the properties of the Bregman distance discussed in the preceding section, the optimizer above satisfies
\begin{equation}
 \nabla \phi(x_{n+1}) = \nabla \phi(x_{n+1}) - \frac{1}{\alpha_{n+1}}\displaystyle\sum_{i = 1}^ns_ig_i - \frac{\gamma_n}{\alpha_{n+1}} \tilde{g}_{n+1},
\end{equation} 
where $\tilde{g}_{n+1}\in\partial G(x_{n+1})$.
Introducing the intermediate iterate $x_{n+\frac{1}{2}}$ defined by 
$$\nabla \phi(x_{n+\frac{1}{2}}) = \nabla \phi(x_{n+1}) - \frac{1}{\alpha_{n+1}}\displaystyle\sum_{i = 1}^ns_ig_i,$$
we can rewrite \eqref{naive_iteration} in terms of a forward and backward step as
\begin{equation}
 \begin{split}
  x_{n+\frac{1}{2}} &=
 \arg\min_x \left(\displaystyle\sum_{i = 1}^n \langle s_ig_i, x\rangle + \alpha_{n+1}D_\phi(x,x_1)\right) \\
 x_{n+1} &=
 \arg\min_x \left(\alpha_{n+1}D_\phi(x,x_{n+\frac{1}{2}}) + \gamma_{n+1} G(x)\right).
 \end{split}
\end{equation}

The advantages of this method are twofold. First, the convergence behavior only depends upon the Lipschitz constant of the function $F$. Second, the backward step $\gamma_n$ grows, which allows the method to obtain sparse iterates even with large step sizes and noisy gradients \cite{xiao2010dual}.

\section{Extended Regularized Dual Averaging}\label{XRDA-section}
In this section, we slightly generalize the RDA method to obtain the XRDA method. To do this, we consider the more general lower bound
\begin{equation}\label{XRDA-lower_bound}
    l_i(z) = (F(x_i) + \langle g_i, z - x_i\rangle) + q_i(G(x_i) + \langle h_i, z - x_i\rangle) + (1-q_i)G(z) \leq f(z),
\end{equation}
where $g_i \in \partial F(x_i)$, $h_i\in \partial G(x_i)$, and $q_i \geq 0$ is a parameter. This is easily seen to be a lower bound since $h_i\in \partial G(x_i)$ so that $ G(x_i) + \langle h_i, z - x_i\rangle \leq G(z)$. Note that this lower bound is not even necessarily convex if $q_i > 1$, but remarkably we can still obtain convergence of the resulting method. We plug this lower bound into equation \eqref{SDA_iteration}, to get the method (setting $t_i = s_iq_i$)
\begin{equation}\label{argmin_formulation_generalized_RDA}
    x_{n+1} = \arg\min_x \left(\displaystyle\sum_{i = 1}^n \langle s_ig_i + t_ih_i, x\rangle + \alpha_{n+1}D_\phi(x,x_1) + \gamma_{n+1} G(x)\right).
\end{equation}
Here $\gamma_{n+1} = \sum_{i=1}^n (1-q_i)s_i = \sum_{i=1}^n (s_i - t_i)$ (restrictions introduced later on the parameters will guarantee that $\gamma_{n+1} > 0$). If we introduce the new parameter $\mu_i = t_i/\gamma_i$, we obtain the perhaps more enlightening relation $\gamma_{n+1} = (1 - \mu_n)\gamma_n + s_n$.

Of course, this is only useful if we can determine an $h_i\in \partial G(x_i)$. We note that the optimality condition corresponding to the iteration \eqref{argmin_formulation_generalized_RDA}, combined with the simple form of the Bregman distance \eqref{simplified-bregman}, implies that
\begin{equation}\label{choice-of-h}
 h_{n+1} :=\gamma_{n+1}^{-1}\left(\alpha_{n+1}(\nabla \phi(x_{n+1}) - \nabla \phi(x_1)) + \displaystyle \sum_{i=1}^n s_ig_i + t_ih_i\right) \in \partial G(x_{n+1}),
\end{equation}
provides an element in $\partial G(x_{n+1})$. Furthermore, if we assume that $x_1\in \arg\min_x G(x)$, then we may choose $h_1 = 0$. Utilizing this choice of $h_{n}$ in \eqref{argmin_formulation_generalized_RDA}, we obtain the XRDA method, which we have summarized in forward form in Table \eqref{xrda-method}.

\RestyleAlgo{boxruled}
\begin{table}
{\LinesNumberedHidden
\begin{algorithm}[H]
\caption{The XRDA Method}
\KwData{$f(x) = F(x) + G(x)$ a composite convex function where $F$ is Lipschitz, $\alpha_n$, $\mu_n$, and $s_n$ sequences of parameters, $N$ an iteration count}
$x_1 \gets \text{initial point}$\;
$x_{\frac{1}{2}} \gets x_0$\;
\For {$n=1,...,N$}{
    $\nabla\phi(x_n^\prime) = \left(1 - \mu_n\right) \nabla\phi(x_{n - \frac{1}{2}}) + \mu_n\nabla\phi(x_n)$\;
    $\nabla\phi(x_{n+\frac{1}{2}}) = \frac{\alpha_n}{\alpha_{n+1}}\nabla\phi(x_n^\prime) + 
    \left(1 - \frac{\alpha_n}{\alpha_{n+1}}\right)\nabla\phi(x_1) - \frac{s_n}{\alpha_{n+1}}g_n$ where $g_n\in \partial F(x_n)$\;
    $\nabla\phi(x_{n+1}) \in \nabla\phi(x_{n+\frac{1}{2}}) - \frac{\gamma_{n+1}}{\alpha_{n+1}}\partial G(x_{n+1})$\;
}
\end{algorithm}}
\caption{The XRDA Iteration for Composite Optimization}
\label{xrda-method}
\end{table}

An illustrative special case of the algorithm can be obtained by setting $\alpha_n = 1$, which gives
\begin{equation}\label{generalized_RDA-simpler}
 \begin{split}
    &\nabla\phi(x_{n+\frac{1}{2}}) = \left(1 - \mu_n\right) \nabla\phi(x_{n - \frac{1}{2}}) + \mu_n\nabla\phi(x_n) - s_ng_n\\
    &\nabla\phi(x_{n+1}) \in \nabla\phi(x_{n+\frac{1}{2}}) - \gamma_{n+1}\partial G(x_{n+1}).
\end{split}
\end{equation}
Note that the last line above is a backward proximal step and can also be written as
\begin{equation}
 x_{n+1} = \arg\min_{x\in V} \left(\gamma_{n+1}G(x) + D_{\phi}(x,x_{n+\frac{1}{2}})\right).
\end{equation}
This choice results in a method which is analogous to subgradient descent, and achieves a convergence rate which is $O(n^{-\frac{1}{2}}\log{n})$. Removing the logarithmic factor is possible by taking a more sophisticated choice of the parameter $\alpha_n$, analogous to dual averaging \cite{nesterov2009primal}. Finally, in the special case where $A = V = \mathbb{R}^n$ and $\phi(x) = \frac{1}{2}\|x\|_2^2$, we get the iteration
\begin{equation}\label{generalized_RDA-simplest}
 \begin{split}
    &x_{n+\frac{1}{2}} = \left(1 - \mu_n\right) x_{n - \frac{1}{2}} + \mu_nx_n - s_ng_n\\
    &x_{n+1} = \arg\min_{x\in V} \left(\gamma_{n+1}G(x) + \frac{1}{2}\|x - x_{n+\frac{1}{2}}\|_2^2\right).
\end{split}
\end{equation}
Since $\gamma_{n+1} = (1 - \mu_n)\gamma_n + s_n$, we can control the backward stepsize $\gamma_n$ by choosing the averaging parameter $\mu_n$ appropriately. We give a full comparison of the XRDA method with forward-backward gradient descent, the SDA method and the RDA method in table \ref{xrda-comparison-table}. Here we can see how different parameter choices result in different special cases of the method.

\begin{table}\label{xrda-comparison-table}
\begin{center}
\begin{tabular}{ |p{1.5cm}|p{6.0cm}|p{7.0cm}| } 
 \hline
  & $\mu_n = 0$ & $\mu_n = 1$ \\ 
 \hline
 $\alpha_n = 1$ \newline $s_n = n^{-\frac{1}{2}}$ & Forward-Backward Gradient descent\newline$x_{n+\frac{1}{2}} = x_n - s_ng_n$\newline $x_{n+1} = \arg\min_{x\in V} \frac{1}{2}\|x - x_{n+\frac{1}{2}}\|_V^2 + s_nG(x)$ & Forward-Backward RDA\newline$x_{n+\frac{1}{2}} = x_{n+1} - s_ng_n$\newline $x_{n+1} = \arg\min_{x\in V} \frac{1}{2}\|x - x_{n+\frac{1}{2}}\|_V^2 + \left(\sum_{i=1}^ns_n\right)G(x)$\\
 \hline
 $\alpha_n = \sqrt{n}$ \newline $s_n = 1$ & Simple Dual Averaging (SDA)\newline introduced in \cite{nesterov2009primal} (for $G = 0$) & Regularized Dual Averaging (RDA)\newline
 introduced in \cite{xiao2010dual} \\ 
 \hline
\end{tabular}
\end{center}
\caption{Demonstrates how particular parameter choices in the XRDA algorithm \eqref{xrda-method} correspond to SDA, RDA, and forward-backward gradient descent.}
\end{table}

Similar to the dual averaging and RDA methods, the XRDA method is robust to noisy gradients. This means that in more generality, we can let $g_i$ be independent random variables whose expectation satisfy $\mathbb{E}(g_i)\in \partial F(x_i)$.

Specifically, let $(\Sigma,\mathcal{F},\mathcal{P})$ denote a probability space and define the subgradient sample function (dependent on the
function $F$)
$g:V\times \Sigma\rightarrow V^*$. Given a point $x\in V$ and a random sample $\xi\in \Sigma$, $g$ returns an unbiased
sample of an element in the subgradient of the objective $F$ at $x$. Mathematically, this means that we require
\begin{equation}\label{sample_condition}
 \mathbb{E}_\xi(g(x,\xi))\in \partial F(x).
\end{equation}

Using this in place of $g_i$ in algorithm \eqref{xrda-method}, we obtain a stochastic version of the XRDA method. We have the following convergence theorem.
\begin{theorem} \label{non-weighted-theorem}
 Assume that $F$ is convex and the gradient samples are bounded by $M$, i.e. that $\|g(x,\xi)\|_{V^*}\leq M$. Let $x^*\in \arg\min_x f(x)$. 
 Let $x_n$ be determined by algorithm \eqref{xrda-method}
 with $0\leq t_i\leq \gamma_i$, $\alpha_i\leq \alpha_{i+1}$, and $s_{i+1}\leq s_i$.
 Then, if $x_1\in \arg\min_x G(x)$ and $h_1 = 0$ (i.e. $x_{\frac{1}{2}} = x_1$), we have
 \begin{equation}
  \mathbb{E}(f(\bar{x}_n) - f(x^*)) \leq S_n^{-1} \left(\alpha_nD_\phi(x^*,x_1) + \frac{M^2}{2\sigma}\displaystyle\sum_{i = 1}^n \frac{s_i^2}{\alpha_i}\right),
 \end{equation}
 where $S_n = \sum_{i=1}^ns_i$ and $\bar{x}_n = S_n^{-1}\sum_{i = 1}^n s_ix_i$ is a weighted average of the iterates with weights $s_i$.
 We also have
 \begin{equation}
  \mathbb{E}\left(\min_{i = 1,...,n} f(x_i) - f(x^*)\right) \leq S_n^{-1} \left(\alpha_nD_\phi(x^*,x_1) + \frac{M^2}{2\sigma}\displaystyle\sum_{i = 1}^n \frac{s_i^2}{\alpha_i}\right).
 \end{equation}
\end{theorem}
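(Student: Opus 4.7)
My plan is to first verify that $l_i$ is a bona fide lower bound on $f$: a direct check using $g_i\in\partial F(x_i)$, $h_i\in\partial G(x_i)$ (which gives $G(z)-G(x_i)-\langle h_i,z-x_i\rangle\geq 0$), and $q_i\geq 0$ shows $l_i(z)\leq f(z)$, and substitution gives $l_i(x_i)=f(x_i)$. Combined with Jensen's inequality applied to $\bar{x}_n$, this produces the regret decomposition
\[
f(\bar{x}_n)-f(x^*)\leq S_n^{-1}\sum_{i=1}^n s_i\bigl[f(x_i)-l_i(x^*)\bigr]=S_n^{-1}\sum_{i=1}^n\Bigl[(s_i-t_i)\bigl(G(x_i)-G(x^*)\bigr)+\langle w_i,x_i-x^*\rangle\Bigr],
\]
with $w_i:=s_ig_i+t_ih_i$. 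The remainder of the proof is devoted to bounding the bracketed sum by $\alpha_n D_\phi(x^*,x_1)+\tfrac{M^2}{2\sigma}\sum s_i^2/\alpha_i$.

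The natural vehicle is the dual-averaging potential
\[
\Psi_{n+1}(x):=\sum_{i=1}^n\langle w_i,x\rangle+\gamma_{n+1}G(x)+\alpha_{n+1}D_\phi(x,x_1),
\]
which agrees with $\sum s_il_i(x)+\alpha_{n+1}D_\phi(x,x_1)$ up to a constant and is therefore minimized at $x_{n+1}$. Since $\gamma_{n+1}\geq 0$ (ensured inductively by $t_i\leq\gamma_i$), the convexity of $G$ together with strong convexity of $D_\phi$ makes $\Psi_{n+1}$ $\sigma\alpha_{n+1}$-strongly convex about its minimum. I will prove by induction on $n$ the lower bound
\[
\Psi_{n+1}(x_{n+1})\geq\sum_{i=1}^n\bigl[(s_i-t_i)G(x_i)+\langle w_i,x_i\rangle\bigr]-\frac{M^2}{2\sigma}\sum_{i=1}^n\frac{s_i^2}{\alpha_i}.
\]
Once this is in hand, the strong-convexity inequality $\Psi_{n+1}(x^*)\geq\Psi_{n+1}(x_{n+1})$ makes the $\gamma_{n+1}G(x^*)$ contributions cancel, and reindexing $\alpha_{n+1}\mapsto\alpha_n$ via the monotonicity hypothesis yields the required regret bound.

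The base case is immediate from $\gamma_1=0$, $x_1\in\arg\min G$, and $h_1=0$. For the inductive step I expand $\Psi_{n+1}(x_{n+1})=\Psi_n(x_{n+1})+s_nl_n(x_{n+1})+(\alpha_{n+1}-\alpha_n)D_\phi(x_{n+1},x_1)$, drop the non-negative $D_\phi$ increment using $\alpha_{n+1}\geq\alpha_n$, and replace $\Psi_n(x_{n+1})$ by $\Psi_n(x_n)+\tfrac{\sigma\alpha_n}{2}\|x_{n+1}-x_n\|_V^2$ via strong convexity of $\Psi_n$ at its minimizer $x_n$. Writing $s_nl_n(x_{n+1})=s_nf(x_n)+\langle w_n,x_{n+1}-x_n\rangle+(s_n-t_n)\bigl[G(x_{n+1})-G(x_n)\bigr]$, the step reduces to verifying
\[
\langle w_n,x_{n+1}-x_n\rangle+(s_n-t_n)\bigl[G(x_{n+1})-G(x_n)\bigr]+\tfrac{\sigma\alpha_n}{2}\|x_{n+1}-x_n\|_V^2\geq-\frac{M^2s_n^2}{2\sigma\alpha_n}.
\]
The main obstacle is that a naive Young's inequality on $\langle w_n,x_{n+1}-x_n\rangle$ would produce $\|w_n\|_{V^*}^2$ and thereby pick up $\|h_n\|_{V^*}^2$, forcing the bound to depend on a Lipschitz constant for $G$. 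My plan is to exploit the subgradient inequality $G(x_{n+1})-G(x_n)\geq\langle h_n,x_{n+1}-x_n\rangle$ to combine the $\langle t_nh_n,x_{n+1}-x_n\rangle$ piece of $\langle w_n,x_{n+1}-x_n\rangle$ with $(s_n-t_n)\bigl[G(x_{n+1})-G(x_n)\bigr]$ so that the $h_n$ contributions cancel, after which only $\langle s_ng_n,x_{n+1}-x_n\rangle$ remains and is absorbed by the quadratic via Young's inequality (using $\|g_n\|_{V^*}\leq M$). When $q_n>1$ so that $s_n<t_n$ and $l_n$ is non-convex, the coefficient $s_n-t_n$ reverses sign; to preserve the cancellation I will invoke the subgradient inequality at $x_{n+1}$ instead of at $x_n$, which is the delicate ingredient going beyond Xiao's original RDA analysis.

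Finally, the stochastic version requires only that every inequality above hold conditionally on the natural filtration $\mathcal{F}_{i-1}$: since $\mathbb{E}[g(x_i,\xi_i)\mid\mathcal{F}_{i-1}]\in\partial F(x_i)$ while $x_i$ and $h_i$ are $\mathcal{F}_{i-1}$-measurable, convexity of $F$ gives $\mathbb{E}\langle g_i,x_i-x^*\rangle\geq\mathbb{E}[F(x_i)-F(x^*)]$, and the tower property then yields the stated bound on $\mathbb{E}[f(\bar{x}_n)-f(x^*)]$. The second inequality follows at once from $\min_if(x_i)\leq S_n^{-1}\sum s_if(x_i)$, since the right-hand side is precisely what we have already controlled.
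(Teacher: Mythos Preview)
Your overall architecture is reasonable and the reduction to bounding $\sum_i\bigl[(s_i-t_i)(G(x_i)-G(x^*))+\langle w_i,x_i-x^*\rangle\bigr]$ is exactly right. However, the heart of your inductive step does not go through as written.

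You reduce the step to
\[
\langle t_nh_n,x_{n+1}-x_n\rangle+(s_n-t_n)\bigl[G(x_{n+1})-G(x_n)\bigr]+\langle s_ng_n,x_{n+1}-x_n\rangle+\tfrac{\sigma\alpha_n}{2}\|x_{n+1}-x_n\|_V^2\geq -\tfrac{M^2s_n^2}{2\sigma\alpha_n},
\]
and claim that the subgradient inequality makes the $h_n$ piece cancel. It does not. When $s_n\geq t_n$, applying $G(x_{n+1})-G(x_n)\geq\langle h_n,x_{n+1}-x_n\rangle$ to $(s_n-t_n)[G(x_{n+1})-G(x_n)]$ and adding $t_n\langle h_n,x_{n+1}-x_n\rangle$ leaves $s_n\langle h_n,x_{n+1}-x_n\rangle$, not zero; this can be arbitrarily negative because $G$ is not assumed Lipschitz, so the quadratic cannot absorb it with an $M$-only constant. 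When $s_n<t_n$, using the subgradient inequality at $x_{n+1}$ brings in $h_{n+1}$, which does not interact with the $t_nh_n$ term and leaves you in the same predicament. In short, the generic subgradient inequality is too weak here.

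What the paper exploits, and what your plan is missing, is that $h_i$ is not an arbitrary element of $\partial G(x_i)$ but the \emph{specific} one coming from the optimality condition of $\Psi_i$. This allows one to replace $\gamma_iG(\cdot)$ inside the potential by $(\gamma_i-t_i)G(\cdot)+t_i\langle h_i,\cdot\rangle$ and still have $x_i$ as the exact minimizer of the modified (and still $\sigma\alpha_i$-strongly convex, since $t_i\leq\gamma_i$) potential. Strong convexity of this modified potential, followed by adding $\langle s_ig_i,x_i-z\rangle$, is what isolates $s_ig_i$ alone in the Young-inequality step and keeps $h_i$ out of the error term entirely. After telescoping with $z=x_{i+1}$ and $z=x^*$ and recombining with the $(s_i-t_i)(G(x_i)-G(x^*))$ terms, a residual $s_1G(x_1)-\sum_{i\geq 2}(s_{i-1}-s_i)G(x_i)-s_nG(x^*)$ survives; this is where the hypotheses $s_{i+1}\leq s_i$ and $x_1\in\arg\min G$ are finally used. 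Your plan never invokes $s_{i+1}\leq s_i$, which is a further sign that the $G$-bookkeeping in your induction is not closing.

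Two smaller points. First, your final comparison $\Psi_{n+1}(x^*)\geq\Psi_{n+1}(x_{n+1})$ produces $\alpha_{n+1}D_\phi(x^*,x_1)$, and monotonicity $\alpha_n\leq\alpha_{n+1}$ goes the wrong way to replace it by $\alpha_nD_\phi(x^*,x_1)$; the paper gets $\alpha_n$ because the comparison with $x^*$ is made at index $n$, not $n+1$. Second, your stochastic paragraph is fine and matches the paper's argument once the deterministic bound is in hand.
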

Notice that $\gamma_1$ is the empty sum and so is $0$. This means since $0\leq t_1\leq \gamma_1$ that $t_1 = 0$ as well. In addition, the condition that $0\leq t_1\leq \gamma_1$ together with the relation $\gamma_{n+1} = \sum_{i=1}^n (s_i - t_i)$ imply that $\gamma_i \geq s_{i-1} > 0$.

 \begin{proof}
  First consider the case where the subgradients are not sampled stochastically, but are deterministic. We begin by noting that by convexity we have
 \begin{equation}\label{eq491}
  f(\bar{x}_n) - f(x^*) \leq S_n^{-1}\displaystyle\sum_{i = 1}^n s_i(f(x_i) - f(x^*)),
 \end{equation}
 and since the minimum is smaller than the average we have
 \begin{equation}\label{eq495}
  \min_{i = 1,...,n} f(x_i) - f(x^*) \leq S_n^{-1}\displaystyle\sum_{i = 1}^n s_i(f(x_i) - f(x^*)).
 \end{equation}
 Thus it suffices to prove that
 \begin{equation}
 \displaystyle\sum_{i = 1}^n s_i(f(x_i) - f(x^*)) \leq 
 \left(\alpha_nD_\phi(x^*,x_1) + \frac{M^2}{2\sigma}\displaystyle\sum_{i = 1}^n \frac{s_i^2}{\alpha_i}\right).
 \end{equation} 
 For this we first use the sub-gradient property (and the assumption that $t_i \geq 0$) to get
 \begin{equation}\label{eq_rda_100}
 \begin{split}
  \displaystyle\sum_{i = 1}^n s_i(f(x_i) - f(x^*)) &\leq \displaystyle\sum_{i = 1}^n \langle s_ig_i + t_ih_i, x_i - x^*\rangle \\
  &+ \displaystyle\sum_{i = 1}^n (s_i - t_i)(G(x_i) - G(x^*)),
  \end{split}
 \end{equation}
 and rewrite the first sum above as
 \begin{equation}\label{eq_rda_105}
 \begin{split}
  \displaystyle\sum_{i = 1}^n \langle s_ig_i + t_ih_i, x_i - x^*\rangle &= \displaystyle\sum_{i = 1}^n \langle s_ig_i + t_ih_i, x_n - x^*\rangle \\
  &+ \displaystyle\sum_{i = 1}^{n-1} \displaystyle\sum_{j = 1}^i\langle s_jg_j + t_jh_j, x_i - x_{i+1}\rangle.
  \end{split}
 \end{equation}
 We proceed by bounding the term
 \begin{equation}
  \sum_{j = 1}^i\langle s_jg_j + t_jh_j, x_i - z\rangle.
 \end{equation}
 Recall from \eqref{argmin_formulation_generalized_RDA} that $x_i = \arg\min_x f_i(x)$ where
 \begin{equation}
  f_i(x) = \displaystyle\sum_{j = 1}^{i-1} \langle s_jg_j + t_jh_j, x\rangle + \alpha_{i}D_\phi(x,x_1) + \gamma_i G(x_i).
 \end{equation}
 The definition of $h_i\in \partial G(x_i)$ given in \eqref{choice-of-h} (note that $i = 1$ is not a problem since 
 $x_1\in \arg\min_x G(x)$ and $h_1 = 0$) means that
 \begin{equation}
  0 = \alpha_i (\nabla \phi(x_i) - \nabla \phi(x_1)) + \gamma_i h_i + \displaystyle\sum_{j = 1}^{i-1} s_jg_j + t_jh_j.
  \end{equation}
  So as long as $t_i\leq \gamma_i$, $f_i^\prime$, defined by
  \begin{equation}
      f_i^\prime(x) = \displaystyle\sum_{j = 1}^{i-1} \langle s_jg_j + t_jh_j, x\rangle  + \alpha_{i}D_\phi(x,x_1) + (\gamma_i - t_i) G(x_i)+ \langle t_ih_i, x\rangle,
  \end{equation}
  is a strongly convex function with convexity parameter $\alpha_i\sigma$ (since $\phi$ is strongly convex with
  parameter $\sigma$), and $0\in \partial f^\prime_i(x_i)$. This implies that (see Lemma 6 in \cite{nesterov2009primal})
  \begin{equation}
      f^\prime_i(x_i) \leq f^\prime_i(z) - \frac{\alpha_i\sigma}{2}\|x_i - z\|_V^2.
  \end{equation}
  Adding $\langle s_ig_i, x_i - z\rangle$ to both sides gives
  \begin{equation}
      f^\prime_i(x_i) + \langle s_ig_i, x_i - z\rangle \leq f^\prime_i(z) - \left(\frac{\alpha_i\sigma}{2}\|x_i - z\|_V^2 - \langle s_ig_i, x_i - z\rangle\right).
  \end{equation}
  H\"older's inequality, combined with the Cauchy-Schwartz inequality implies that
  $$\left(\frac{\alpha_i\sigma}{2}\|x_i - z\|_V^2 - \langle s_ig_i, x_i - z\rangle\right) \geq -s_i^2\frac{1}{2\alpha_i\sigma}\|g_i\|_{V^*}^2,
  $$
  and we get (since $g_i\in \partial F(x_i)$ and so the Lipschitz assumption on $F$ implies $\|g_i\|_{V^*} \leq M$)
  \begin{equation}
      f^\prime_i(x_i) + \langle s_ig_i, x_i - z\rangle \leq f^\prime_i(z) + \frac{M^2}{2\alpha_i\sigma}s_i^2.
  \end{equation}
  Expanding the definition of $f^\prime_i$, we obtain
  \begin{equation}
  \begin{split}
      \sum_{j = 1}^i\langle s_jg_j + t_jh_j, x_i - z\rangle &\leq \alpha_{i}(D_\phi(z,x_1) - D_\phi(x_i,x_1))\\
      &+ (\gamma_i - t_i)(G(z) - G(x_i)) + \frac{M^2}{2\alpha_i\sigma}s_i^2.
  \end{split}
  \end{equation}
 Plugging this bound into \eqref{eq_rda_105} with $z = x_{i+1}$ and $z = x^*$, (recalling that 
 $\gamma_n = \sum_{i = 1}^{n-1} s_i - t_i$ and $\gamma_1 = t_1 = 0$), 
 we obtain
 \begin{equation}
 \begin{split}
  \displaystyle\sum_{i = 1}^n \langle s_ig_i + t_ih_i, x_i - x^*\rangle& \leq \alpha_n D_\phi(x^*,x_1)
  + \displaystyle\sum_{i = 2}^n (\alpha_{i-1} - \alpha_i)D_\phi(x_i,x_1) \\
  &+ \frac{M^2}{2\sigma}\displaystyle\sum_{i = 1}^n \frac{s_i^2}{\alpha_i}
  + \displaystyle\sum_{i = 2}^n (t_i - s_{i-1})G(x_i) + (\gamma_n - t_n)G(x^*).
  \end{split}
 \end{equation}
 Plugging this into \eqref{eq_rda_100} and rearranging terms, we obtain
 \begin{equation}
 \begin{split}
  \displaystyle\sum_{i = 1}^n s_i(f(x_i) - f(x^*)) &\leq \alpha_n D_\phi(x^*,x_1)
  + \frac{M^2}{2\sigma}\displaystyle\sum_{i = 1}^n \frac{s_i^2}{\alpha_i} \\
  &+ \displaystyle\sum_{i = 2}^n (\alpha_{i-1} - \alpha_i)D_\phi(x_i,x_1) \\
  &+ s_1 G(x_1) - \displaystyle\sum_{i = 2}^n (s_{i-1} - s_i)G(x_i) - s_nG(x^*).
  \end{split}
 \end{equation}
 Finally, we use the assumption that $\alpha_{i-1} - \alpha_i \leq 0$, that $s_{i-1} - s_i \geq 0$, and that
 $x_1\in \arg\min_x G(x)$ to conclude that
 $$\displaystyle\sum_{i = 2}^n (\alpha_{i-1} - \alpha_i)D_\phi(x_i,x_1) \leq 0,
 $$
 and that
 $$s_1 G(x_1) - \displaystyle\sum_{i = 2}^n (s_{i-1} - s_i)G(x_i) - s_nG(x^*) \leq s_1(G(x_1) - \min_x G(x)) = 0.
 $$
 This yields
 \begin{equation}
  \displaystyle\sum_{i = 1}^n s_i(f(x_i) - f(x^*)) \leq \alpha_n D_\phi(x^*,x_1)
  + \frac{M^2}{2\sigma}\displaystyle\sum_{i = 1}^n \frac{s_i^2}{\alpha_i},
 \end{equation}
 which completes the proof in the deterministic case.
 
 Now suppose that the subgradients are stochastic. Taking the expectation of equations \eqref{eq491} and \eqref{eq495}, we see that it suffices to prove that
 \begin{equation}
 \mathbb{E}\left(\displaystyle\sum_{i = 1}^n s_i(f(x_i) - f(x^*))\right) \leq 
 \left(\alpha_nD_\phi(x^*,x_1) + \frac{M^2}{2\sigma}\displaystyle\sum_{i = 1}^n \frac{s_i^2}{\alpha_i}\right).
 \end{equation}
Using the assumption that $\mathbb{E}_{\xi_i}(g(x_i,\xi_i))\in \partial f(x_i)$, the fact that
$x_i$ only depends on $\xi_1,...,\xi_{i-1}$, and the assumption that $t_i \geq 0$, we see that
\begin{equation}\label{eq602}
\begin{split}
 \mathbb{E}\left(\displaystyle\sum_{i = 1}^n s_i(f(x_i) - f(x^*))\right) &= \displaystyle\sum_{i = 1}^n \mathbb{E}_{\xi_1,...,\xi_{i-1}}(s_i(f(x_i) - f(x^*)))\\
&\leq \displaystyle\sum_{i = 1}^n \mathbb{E}_{\xi_1,...,\xi_{i-1}}(\langle s_i\mathbb{E}_{\xi_i}(g(x_i,\xi_i)) + t_ih_i, x_i - x^*\rangle) \\
  &+ \displaystyle\sum_{i = 1}^n \mathbb{E}_{\xi_1,...,\xi_{i-1}}((s_i - t_i)(G(x_i) - G(x^*))).
 \end{split}
 \end{equation}
Since $h_i$ and $x_i$ are independent of $\xi_i$, we can pull the $\mathbb{E}_{\xi_i}$ out of the
inner product above and the left hand side simply becomes
\begin{equation}\label{eq611}
 \mathbb{E}\left(\displaystyle\sum_{i = 1}^n \langle s_i(g(x_i,\xi_i)) + t_ih_i, x_i - x^*\rangle + 
 \displaystyle\sum_{i = 1}^n (s_i - t_i)(G(x_i) - G(x^*))\right).
\end{equation}
We now bound the term inside the expectation exactly as before to obtain
\begin{equation}
\begin{split}
 \displaystyle\sum_{i = 1}^n \langle s_i\mathbb{E}_{\xi_i}(g(x_i,\xi_i)) &+ t_ih_i, x_i - x^*\rangle + 
 \displaystyle\sum_{i = 1}^n (s_i - t_i)(G(x_i) - G(x^*)) \leq \\
 &\alpha_nD_\phi(x^*,x_1) + \frac{M^2}{2\sigma}\displaystyle\sum_{i = 1}^n \frac{s_i^2}{\alpha_i}.
 \end{split}
\end{equation}
Combining this with \eqref{eq611} and \eqref{eq602}, we get
 \begin{equation}
 \mathbb{E}\left(\displaystyle\sum_{i = 1}^n s_i(f(x_i) - f(x^*))\right) \leq 
 \left(\alpha_nD_\phi(x^*,x_1) + \frac{M^2}{2\sigma}\displaystyle\sum_{i = 1}^n \frac{s_i^2}{\alpha_i}\right),
 \end{equation}
 which completes the proof.

\end{proof}

\section{Experimental Results}
\label{experimental-results-section}
In this section, we provide experimental evidence that the XRDA method exhibits improved convergence behavior over RDA on some convex optimization problems. In particular, we consider the problem of calculating a sparse logistic regression on the MNIST dataset \cite{lecun1998gradient}. Specifically, the objective we consider is
\begin{equation}\label{logistic-regression-objective}
 L(W,b) = \frac{1}{N}\sum_{i=1}^N\left[ \log(\exp(y_i\cdot (Wx_i + b)) - \log\left(\sum_{j=1}^k \exp(e_j\cdot (Wx_i + b))\right)\right] + \lambda(|W|_1 + |b|_1),
\end{equation}
where $W\in \mathbb{R}^{k\times n}$, $b\in \mathbb{R}^k$ are the parameters of the logistic regression, $x_i\in \mathbb{R}^n$ is the training data and $y_i\in \mathbb{R}^k$ are the training labels \cite{james2013introduction}. For MNIST the specific parameters are $N = 60000$ (number of training data points), $n = 784$ (dimension of the image data), $k = 10$ (number of classes).

We optimize the objective \eqref{logistic-regression-objective} with $\lambda = 5\cdot 10^{-4}$ using stochastic forward backward-splitting, which is given by
\begin{equation}
 \begin{split}
  &\theta_{n+\frac{1}{2}} = \theta_n - s_ng_n\\
  &\theta_{n+1} = \arg\min_x \left(\frac{1}{2}\|\theta - \theta_{n+\frac{1}{2}}\|^2 + s_n\lambda|\theta|_1\right).
 \end{split}
\end{equation}
Here $\theta_n = (W_n,b_n)$ are the parameters to be optimized.
Next, we optimize the same objective using forward-backward RDA, given by
\begin{equation}
 \begin{split}
  &\theta_{n+\frac{1}{2}} = \theta_{n-\frac{1}{2}} - s_ng_n\\
  &\theta_{n+1} = \arg\min_x \left(\frac{1}{2}\|\theta - \theta_{n+\frac{1}{2}}\|^2 + \left(\sum_{i=1}^n s_n\right)\lambda|\theta|_1\right).
 \end{split}
\end{equation}
Finally, we optimize the objective using XRDA which is set to asymptotically produce a fixed backward step-size of $M$. This iteration is given by
\begin{equation}
 \begin{split}
  &\mu_n = 1-\frac{s_n}{M}\\
  &\theta_{n+\frac{1}{2}} = \mu_n\theta_{n-\frac{1}{2}} + (1-\mu_n)\theta_n - s_ng_n\\
  &b_n = \mu_nb_{n-1} + s_n\\
  &\theta_{n+1} = \arg\min_x \left(\frac{1}{2}\|\theta - \theta_{n+\frac{1}{2}}\|^2 + b_n\lambda|\theta|_1\right).
 \end{split}
\end{equation}
Note that this corresponds to the XRDA iteration \ref{xrda-method} with $\alpha_n = 1$, $\mu_n = 1-s_n/M$ and $\phi = \frac{1}{2}\|\cdot\|^2$. We take $M=500,1000,2500,5000,$ and $10000$. We run all of these methods with an initial iterate $\theta_1 = \theta_{1/2} = 0$, a batch size of $10$, and step size $s_n = 3.0\cdot n^{-\frac{1}{2}}$ for a total of $50$ epochs. Note that this scaling of the step size results in the optimal theoretical convergence rate of $O(n^{-\frac{1}{2}})$ for all methods. We plot the number of non-zero components, the test accuracy and the training loss of each of these methods as a function of iteration in Figure \ref{experimental-results}.
\begin{figure}\label{experimental-results}
\begin{center}
 \includegraphics[scale=0.5]{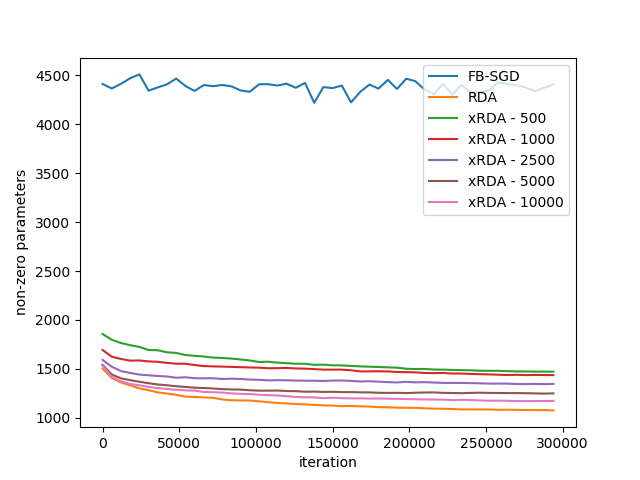}
 \includegraphics[scale=0.5]{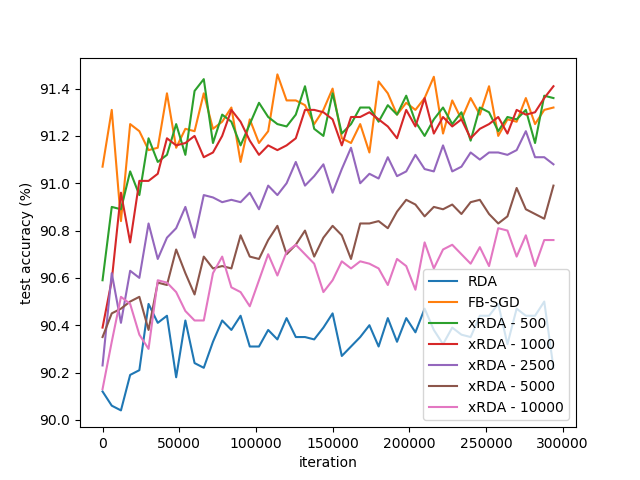}
 \includegraphics[scale=0.6]{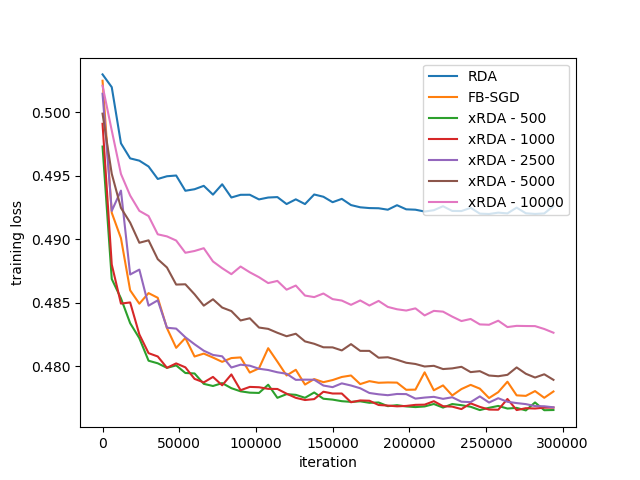}
 \end{center}
 \caption{Comparison of stochastic forward-backward splitting (FB-SGD), RDA, and XRDA with various values for the asymptotic backward step size $M$. We have plotted the number of non-zero parameters, the test accuracy and the training loss for each of the methods.}
\end{figure}

From this experiment, we draw the following conclusions. First, we see that forward-backward stochastic gradient descent doesn't generate sparse solutions, as expected. In contrast, RDA and XRDA all generate sparse solutions. Further, the larger the backward step size $M$ used in XRDA, the sparser the iterates are. This extends to RDA which generates the sparsest iterates. However, the difference between the sparsity generated by these methods is not particularly large. 

Next, considering the training loss (i.e. the objective \eqref{logistic-regression-objective} we are trying to minimize), we see that the larger backward step size used in XRDA improves the convergence rate over forward-backward SGD. In particular, the loss converges fastest with asymptotic backward step size $M=500$ and $M=1000$. With an asymptotic backward step size of $M=2500$, XRDA converges at the roughly the same rate as forward-backward SGD initially and then converges faster for larger iterations. However, once the backward step size of XRDA is pushed beyond this point, the convergence rate slows dramatically as can be seen for $M=5000$ and $M=10000$. Finally, RDA exhibits by far the slowest convergence rate. This shows that the large backward step size produced by the RDA iteration can dramatically slow down convergence. The best results are obtained using XRDA with parameters chosen to give an appropriate backward step size.

\section{Conclusion}
We have derived and analysed the extended regularized dual averaging (XRDA) method in Table \ref{xrda-method}, 
a class of methods
for solving regularized optimization problems which generalize RDA \cite{xiao2010dual}. We derived and analysed the convergence of 
these methods for regularized convex optimization problems. The main novelty of these new methods is that we can control the proximal (or backward) step size. Finally, we have demonstrated experimentally that controlling the backward step size can significantly improve the convergence of the resulting methods while still preserving the desired structure (such as sparsity) of the iterates. 

We also remark that these methods are useful for non-convex optimization problems appearing in machine learning as well, since BinaryConnect \cite{courbariaux2015binaryconnect} and BinaryRelax \cite{yin2018binaryrelax} are special cases of the method applied to non-convex neural networks trianing, RDA has successfully been applied to non-convex problems such as the training of sparse neural networks \cite{jia2018modified}, and the XRDA method has also been shown to effectively train sparse neural networks \cite{siegel2020training}.

\section{Acknowledgements}
We would like to thank Dr. Xiao Lin for helpful comments and discussion during the preparation of this work. This work was supported by the Verne M. Willaman Fund, the Center for Computational Mathematics and Applications (CCMA) at the Pennsylvania State University, and the National Science Foundation (Grant No. DMS-1819157).

\bibliographystyle{spmpsci}      %
\bibliography{refs}   %

\end{document}